\theoremstyle{definition}
\newtheorem*{lemma*}{Lemma}
\newcommand{\myfig}[3][]{
 \begin{figure}
 \begin{center}
 {\mbox{\includegraphics[#1]{#2.eps}}}
 \end{center}
 \caption{\label{#2}#3}
 \end{figure}}
 \newcommand{\myfigh}[3][]{
 \begin{figure}[h!]
 \begin{center}
 {\mbox{\includegraphics[#1]{#2.eps}}}
 \end{center}
 \caption{\label{#2}#3}
 \end{figure}}
\def\vc#1{\mathbf #1}
\def\mymatrix#1{\begin{bmatrix}#1\end{bmatrix}}
\def\O{{\mathcal O}}
\def\F{{\mathcal F}}
\long\def\ignore#1{}
\def\tt{\theta}
\def\H{{\mathcal H}}
\def\nd{\noindent}
\def\fref#1{Figure~\ref{#1}}
\def\ss{\sigma}
 \providecommand{\Pic}{\mathop{\rm Pic}\nolimits}
 \providecommand{\Aut}{\mathop{\rm Aut}\nolimits}
\def\Bbb#1{{\mathbb #1}}
\def\dd{\delta}
\def\CC{{\Gamma}}
\def\proj{{\rm{proj}}}
\def\X{\mathcal X}
\def\H{{\mathcal H}}
\begin{document}

\title{The Apollonian circle packing and ample cones for K3 surfaces}
\author{Arthur Baragar}
\begin{abstract}
In an earlier work, we gave an Apollonian-like pictorial representation for the ample cone for a class of K3 surfaces.  This raises a natural question: Does the Apollonian packing itself represent the ample cone for a K3 surface?  In this note, we answer this question in the affirmative.
\end{abstract}
\subjclass[2010]{14J28, 14J27, 14J50, 51F15, 20F55, 52C26} \keywords{Apollonius, Apollonian, circle packing, sphere packing, K3 surface, ample cone, lattice}
\address{Department of Mathematical Sciences, University of Nevada, Las Vegas, NV 89154-4020}
\email{baragar@unlv.nevada.edu}
\thanks{\nd \LaTeX ed \today.}

\maketitle

In \cite{Bar11}, we gave Apollonian-like pictorial representations of the ample cone for a class of K3 surfaces (see \fref{fig0}).  That raises the natural question: Does the Apollonian packing itself represent the ample cone for a K3 surface?  In this short note, we answer this question in the affirmative.

\myfigh{fig0}{The Apollonian packing and the ample cone found in \cite{Bar11}.}

Suppose $\X$ is a K3 surface with ample cone the Apollonian packing.  Then the Picard number for $\X$ is four, and each disc in the packing represents a face of the ample cone, so represents a $-2$ curve on $\X$.  Where two circles are tangent is a divisor class that represents an elliptic fibration of $\X$ \cite{Kov94}.  This representation of the ample cone can be thought of as the boundary of the Poincar\'e upper half space model of $\Bbb H^3$.  Let us pick for the point at infinity a point of tangency of two circles in the packing.  Then the Apollonian packing becomes the familiar strip packing shown in \fref{fig1}.  In \fref{fig1}, we have distinguished two circles and a line and labeled them $\vc e_1$, $\vc e_2$, and $\vc e_3$.  Let us denote the point at infinity with $\vc e_4$.  These four vectors can be thought of as effective elements of $\Pic(\X)$, representing either $-2$ curves ($\vc e_i$ for $i=1,2,3$), or a class of elliptic curves ($\vc e_4$).  The various intersections of these divisors can be determined from their roles in $\Pic(\X)$ and the geometry of the packing.

\myfig{fig1}{The Apollonian packing.  The dotted lines represent symmetries of of the packing.}

We first have $\vc e_i\cdot \vc e_i=-2$ for $i=1,2,3$, and $\vc e_4\cdot \vc e_4=0$.  Since the circles are pairwise tangent, we get $\vc e_i\cdot \vc e_j=\pm 2$  for $i\neq j\in \{1,2,3\}$.  This follows from the following result in the pseudospherical or vector model of hyperbolic geometry embedded in a Lorentz space:
\[
||\vc n|| ||\vc m|| \cos\tt =\pm \vc n\cdot \vc m,
\]
where $\tt$ is the angle between the planes $\vc n\cdot \vc x=0$ and $\vc m\cdot \vc x=0$ (if they intersect).  Here $||\vc u||=\sqrt{\vc u \cdot \vc u}$ is the positive or positive imaginary root.  The ambiguity of the sign follows from the ambiguity of the direction of the normal vectors, or whether we desire the acute or obtuse angle between the planes.  In our case, $\tt=0$ (or $\pi$), and since the vectors are effective, the intersections are positive.  Thus $\vc e_i\cdot \vc e_j=2$.  For the intersection with $\vc e_4$, we first have $\vc e_3\cdot \vc e_4=0$, since $\vc e_4$ is on the plane $\vc e_3\cdot \vc x=0$.  For $\vc e_1\cdot \vc e_4$ and $\vc e_2\cdot \vc e_4$, we appeal to the following result:

\begin{lemma*} Let $\vc n\cdot \vc x=0$ represent a hyperplane in the pseudospherical model of $\Bbb H^k$, and let $E$ satisfy $E\cdot E=0$, so $E$ represents a point on the boundary $\partial \Bbb H^k$ of $\Bbb H^k$.   Then in the Poincar\'e upper half space model of $\Bbb H^k$ with $E$ the point at infinity, the hyperplane is represented by a $(k-2)$-sphere in $\partial \Bbb H^k\setminus \{E\}$ with curvature
\[
\left | \frac{E\cdot \vc n}{||\vc n||}\right |.
\]
\end{lemma*}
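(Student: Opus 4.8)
The plan is to introduce an explicit isotropic basis adapted to $E$ and to read the sphere off directly from the defining equation $\vc n\cdot \vc x=0$. Since $E$ is isotropic, I would first choose a second isotropic vector $F$ with the normalization $E\cdot F=1$, and set $V=\langle E,F\rangle^\perp$. In the Lorentz space carrying $\Bbb H^k$ the plane $\langle E,F\rangle$ is hyperbolic, so $V$ is definite of dimension $k-1$; writing $|\vc v|^2=|\vc v\cdot \vc v|$ and $\langle\,\cdot\,,\cdot\,\rangle$ for the associated Euclidean inner product turns $V$ into a copy of $\Bbb R^{k-1}$. Every boundary point other than $E$ then has a unique isotropic representative of the form $\vc x(\vc v)=\tfrac12|\vc v|^2\,E+F+\vc v$ with $\vc v\in V$, and $\vc v\mapsto \vc x(\vc v)$ is precisely the identification of $\partial \Bbb H^k\setminus\{E\}$ with the boundary $\Bbb R^{k-1}$ of the upper half space model having $E$ at infinity.

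Next I would expand $\vc n=\alpha E+\beta F+\vc w$ with $\vc w\in V$ and substitute $\vc x(\vc v)$ into $\vc n\cdot \vc x=0$. Using $E\cdot E=F\cdot F=0$, $E\cdot F=1$, and $V\perp\langle E,F\rangle$, the equation collapses to the single scalar relation $\tfrac{\beta}{2}|\vc v|^2-\langle \vc w,\vc v\rangle+\alpha=0$. For $\beta\neq 0$ this is visibly the equation of a $(k-2)$-sphere; completing the square gives center $\vc w/\beta$ and squared radius $r^2=(|\vc w|^2-2\alpha\beta)/\beta^2$. Since $\vc n\cdot \vc n=2\alpha\beta-|\vc w|^2$, the numerator is $-\vc n\cdot \vc n$, so $r^2=|\vc n\cdot \vc n|/\beta^2$. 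Finally I would note that under $E\cdot F=1$ we have $E\cdot \vc n=\beta$, whence the curvature is $1/r=|\beta|/\sqrt{|\vc n\cdot \vc n|}=|E\cdot \vc n/\,||\vc n||\,|$, as claimed. The degenerate case $\beta=0$ is exactly $E\cdot \vc n=0$, i.e.\ $E$ lies on the hyperplane, and the locus is a genuine affine hyperplane of curvature $0$, again matching the formula.

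The main obstacle is conceptual rather than computational: the word ``curvature'' only makes sense once a Euclidean metric is fixed on $\Bbb R^{k-1}$, and the stated value $|E\cdot \vc n/\,||\vc n||\,|$ is not invariant under rescaling $E$. I therefore must argue that the normalization $E\cdot F=1$ canonically fixes the boundary metric of the model, check that the residual freedom in choosing $F$ (replacing it by another isotropic vector with $E\cdot F=1$) acts only by translations and rotations of the $\vc v$-coordinates and hence leaves $r$ unchanged, and verify that $E\mapsto \lambda E$ rescales that metric so that radii scale by $\lambda^{-1}$ and curvatures by $\lambda$ --- exactly the homogeneity of $|E\cdot \vc n/\,||\vc n||\,|$ in $E$. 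A secondary point to keep straight is the sign convention: here $\vc n$ is spacelike with $\vc n\cdot \vc n<0$, so $||\vc n||$ is positive imaginary, and it is the modulus in the statement (together with $r^2=|\vc n\cdot \vc n|/\beta^2$) that makes both $r$ and the curvature come out real and positive.
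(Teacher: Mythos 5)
Your proof is correct, but it takes a genuinely different route from the paper's. The paper leans on an external result (\cite{Bar16}) asserting that $|PQ|_E^2=\frac{2P\cdot Q}{(P\cdot E)(Q\cdot E)}$ is a Euclidean metric on $\partial \Bbb H^k\setminus\{E\}$; granting that, it identifies the center of the sphere as the reflection $P=R_{\vc n}(E)=E-\frac{2\vc n\cdot E}{\vc n\cdot \vc n}\vc n$, takes an arbitrary boundary point $Q$ on the hyperplane, and computes $r^2=|PQ|_E^2=\frac{-\vc n\cdot \vc n}{(\vc n\cdot E)^2}$ in three lines. You instead build the boundary coordinates of the upper half space model from scratch: a null partner $F$ with $E\cdot F=1$, the definite complement $V=\langle E,F\rangle^\perp$, the parametrization $\vc x(\vc v)=\tfrac12|\vc v|^2E+F+\vc v$, and a completion of the square in the trace equation $\tfrac{\beta}{2}|\vc v|^2-\langle \vc w,\vc v\rangle+\alpha=0$. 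The two arguments agree on the nose: in your coordinates $\frac{2\,\vc x(\vc v_1)\cdot \vc x(\vc v_2)}{(\vc x(\vc v_1)\cdot E)(\vc x(\vc v_2)\cdot E)}=|\vc v_1-\vc v_2|^2$, so the Euclidean structure your normalization $E\cdot F=1$ induces on $V$ is exactly the metric $|PQ|_E$ that the paper imports; checking this identity is also the cleanest way to discharge what you call the ``main obstacle,'' and it shows the residual freedom in $F$ is harmless since it only changes the null basis, not the quantity $|PQ|_E$. As for what each approach buys: yours is self-contained (no appeal to \cite{Bar16}), it transparently covers the degenerate case $E\cdot \vc n=0$ --- where the trace is an affine $(k-2)$-plane of curvature $0$, and where the paper's formula gives $P=E$ so that its computation breaks down --- and it makes the scaling behaviour in $E$ explicit, something the paper relegates to a remark after the proof. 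The paper's route, granted its cited lemma, is shorter and yields a geometric byproduct your completion of the square does not: the Euclidean center of the sphere is the reflection of the point at infinity through the hyperplane.
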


\begin{proof}  In \cite{Bar16}, we prove that the metric $|PQ|_E$ given by
\[
|PQ|_E^2=\frac{2 P\cdot Q}{(P\cdot E)(Q\cdot E)}
\]
is a Euclidean metric on $\partial \Bbb H^k\setminus \{E\}$.   Let $P$ be the center of the sphere that represents the hyperplane $\vc n\cdot \vc x=0$.  Then $P$ is a linear combination of $\vc n$ and $E$, and satisfies $P\cdot P=0$, from which we conclude $P=E-\frac{2\vc n\cdot E}{\vc n\cdot \vc n}\vc n$.  (The point can also be found by reflecting $E$ in the plane $\vc n\cdot \vc x=0$, so $P=R_{\vc n}(E)$, using Eq.~(\ref{eq1}) below.)   Let $Q\in \partial \Bbb H^k$ be any point on the plane, so $Q\cdot Q=0$ and $\vc n\cdot Q=0$.  The radius of the sphere therefore satisfies
\begin{align*}
r^2=|PQ|_E^2&=\frac{2P\cdot Q}{(P\cdot E)(Q\cdot E)} \\
&=\frac{2E\cdot Q}{\left(-\frac{2\vc n\cdot E}{\vc n\cdot \vc n}\right)(\vc n\cdot E)(Q\cdot E)}\\
&=\frac{-\vc n\cdot \vc n}{(\vc n\cdot E)^2},
\end{align*}
from which the result follows.
\end{proof}

Note that $E$ can be scaled and still represent the same point on $\partial \Bbb H^k$.  Similarly, the metric and the curvature can be scaled.  However, once $E$ is fixed, the scaling factor is fixed.

From this Lemma, we get $\vc e_1\cdot \vc e_4=\vc e_2\cdot \vc e_4$, since the circles have the same curvature.  Let us call this intersection $a$.  Since the vectors are effective, $a$ is a positive integer.

Recall that $\vc e_4$ represents an elliptic fibration of $\X$, but $\vc e_3\cdot \vc e_4=0$.  This means $\vc e_3$ is a component of one of these fibers.  The other component, $\vc e_4-\vc e_3$, is another $-2$ curve.  As it lies in a fiber, its intersection with $\vc e_4$ is zero, which can be verified directly:  $\vc e_4\cdot (\vc e_4-\vc e_3)=0$.  Thus, it must be the other line labeled $L$ in \fref{fig1}.  On the other hand,
\[
\vc e_1\cdot (\vc e_4-\vc e_3)=\vc e_1\cdot \vc e_4-\vc e_1\cdot \vc e_3=a-2=2,
\]
where the last equality is because $L$ and $\vc e_1$ are tangent and both are effective.  Thus $a=4$, and the intersection matrix is
\[
J=[\vc e_i\cdot \vc e_j]=\mymatrix{-2 & 2 & 2 & 4 \\ 2& -2 & 2 & 4 \\ 2&2&-2&0 \\ 4&4&0&0}.
\]
The lattice $\vc e_1\Bbb Z\oplus \vc e_2\Bbb Z\oplus \vc e_3\Bbb Z\oplus \vc e_4\Bbb Z$ is even, so there exists a K3 surface $\X$ with Picard group $\Pic(\X)$ equal to this lattice \cite{Mor84}.

We still must verify that the ample cone for $\X$ is in fact the Apollonian packing.  To do this, we first find the group $\O^+=\O^+(\Bbb Z)$ of isometries of $\Bbb H^3$ that preserve the lattice $\Pic(\X)$.  We begin by finding the vector $\vc n_1$ that gives the dotted line so labeled in \fref{fig1}.  Because $\vc n_1$ is perpendicular to $\vc e_1$ and $\vc e_3$, and goes through $\vc e_4$, we get the three relations $\vc n_1\cdot \vc e_i=0$ for $i=1,3$ and $4$.  Solving, and scaling to get a lattice point with minimal entries, we find $\vc n_1=[-1,1,0,1]$ (up to $\pm$).  Reflection through the plane $\vc n\cdot \vc x=0$ is given by
\begin{equation}\label{eq1}
R_{\vc n}(\vc x)=\vc x-2\proj_{\vc n}(\vc x)=\vc x-2\frac{\vc n\cdot \vc x}{\vc n\cdot \vc n}\vc n.
\end{equation}
It is straightforward to verify that $R_{\vc n_1}$ has integer entries, so is in $\O^+$.  The vector $\vc n_2=[0,0,2,-1]$ is similarly derived, and $R_{\vc n_2}$ is also in $\O^+$.  To derive $\vc n_3=[1,-1,0,1]$, first solve for $P=[1,1,0,0]$, which satisfies $P\cdot \vc e_1=P\cdot \vc e_2=P\cdot P=0$, and then solve $\vc n_3\cdot \vc e_3=\vc n_3\cdot \vc e_4=\vc n_3\cdot P=0$.  (The equation $\vc n_3\cdot \vc n_2=0$ gives a linearly dependent equation, so is not useful.)  Finally, $\vc n_4=[1,0,-1,1]$ is derived using $\vc n_4\cdot \vc e_2=\vc n_4\cdot \vc e_3=\vc n_4\cdot \vc n_1=0$.  Both $R_{\vc n_3}$ and $R_{\vc n_4}$ are in $\O^+$, and, of course, $R_{\vc e_3}\in \O^+$.  The region $\F$ bounded by the planes generated by $\vc n_1$, $\vc n_2$, $\vc n_3$, $\vc n_4$, and $\vc e_3$ has finite volume so  the group $\langle R_{\vc n_1}, R_{\vc n_2}, R_{\vc n_3}, R_{\vc n_4}, R_{\vc e_3}\rangle$ has finite index in $\O^+$.  The fundamental domain $\F$ has only the one cusp at $\vc e_4$, so if the group is not all of $\O^+$, then there must be an isometry $\tau$ in $\O^+$ that is also a Euclidean symmetry of the square we see in \fref{fig1}.  We note that $\vc n_i\cdot \vc n_i=-8$ for $i=1,...,4$, so $\tau$ must fix $\vc e_3$, as its square norm is different.  Hence, the only possibility for $\tau$ is reflection in the plane represented by a line parallel to the lines labeled with $\vc n_1$ and $\vc n_3$, and midway between them.  Such a plane has normal vector $\vc n_5=[2,-2,0,1]$, but $R_{\vc n_5}$ does not have integer entries, so is not in $\O^+$.  Thus, $\O^+=\langle R_{\vc n_1}, R_{\vc n_2}, R_{\vc n_3}, R_{\vc n_4}, R_{\vc e_3}\rangle$.  Since only one face of $\F$ is generated by a $-2$ curve, the ample cone for $\X$ is the region bounded by the planes in the image of the plane $\vc e_3\cdot \vc x=0$ under the action of $\CC=\langle R_{\vc n_1}, R_{\vc n_2}, R_{\vc n_3}, R_{\vc n_4}\rangle$.  This yields the Apollonian packing shown in \fref{fig1}, as desired.

\section*{The Apollonian sphere packing}

The Apollonian sphere packing or Hexlet \cite{Sod37} (Indra's pearls? -- see \cite{MSW02}*{page ii}) is the three dimensional analog of the Apollonian circle packing.  We begin with five mutually tangent spheres and in the space between any four of them, we inscribe another.  This gives us new subsets of four mutually tangent spheres, so we repeat the procedure.  Note that it is not {\it a priori} apparent that this procedure has no obstruction.  Unlike in two dimensions, the space outside the mutually tangent spheres is connected.

We will again view the packing as lying on the boundary of the Poincar\'e upper half hyperspace model of $\Bbb H^4$, with a point of tangency $\vc e_5$ chosen as the point at infinity.  Then $\vc e_5$ represents an elliptic fibration of some hypothetical K3 surface $\X$.  We choose our other four basis vectors to be three mutually tangent spheres of equal radii, representing the $\vc e_1, \vc e_2$ and $\vc e_3$; and a plane, representing $\vc e_4$, that is tangent to all three spheres, as in \fref{fig2}.  As before, $\vc e_i$ for $i=1 ... 4$ represent $-2$ curves on $\X$.  We let $L$ be the other plane (not shown) that is tangent to the three spheres, observe that $L=\vc e_5-\vc e_4$, and conclude again that $\vc e_i\cdot \vc e_5=4$ for $i=1,2$ and $3$.  This gives us the intersection matrix
\[
J=\mymatrix{-2&2&2&2&4 \\ 2&-2&2&2&4 \\ 2&2&-2&2&4 \\ 2&2&2&-2&0 \\ 4&4&4&0&0}.
\]
Again, by Morrison's result, we know there exists a K3 surface $\X$ with intersection matrix $J$ and Picard lattice $\Pic(\X)=\vc e_1\Bbb Z\oplus ...\oplus \vc e_5\Bbb Z$.

\begin{figure}
 \begin{center}
 {\mbox{\includegraphics[width=.6\textwidth]{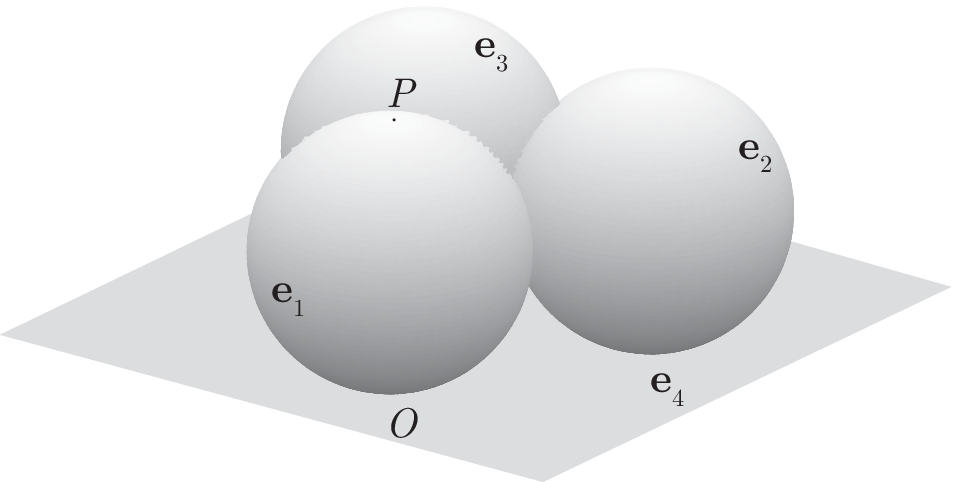}} \mbox{\includegraphics[width=.38\textwidth]{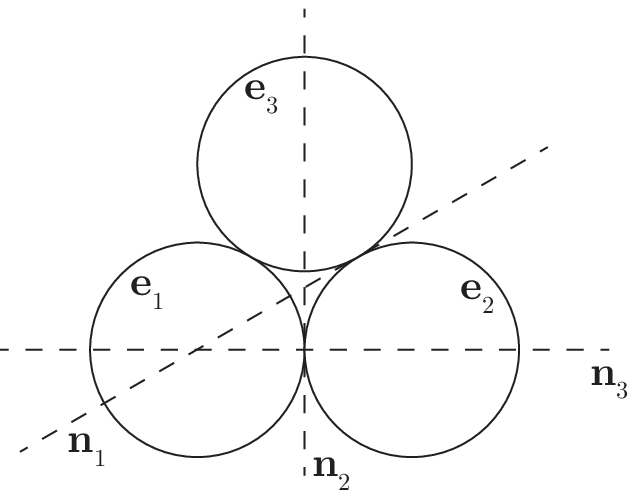}}}
 \end{center}
 \caption{\label{fig2} Basis vectors for the sphere packing.  The picture on the right is a view from above.  The dotted lines represent planes perpendicular to $\vc e_4$.  Reflections through these planes give some of the symmetries of the packing.}
 \end{figure}

We find $\O^+$ in a similar way.  Let us introduce the notation $H_{\vc n}$ for the hyperplane $\vc n\cdot \vc x=0$ in $\Bbb H^4$.  We will also use $H_{\vc n}$ to denote its intersection with $\Bbb R^3$ on the boundary of $\Bbb H^4$ in the Poincar\'e model, which is either a sphere or plane.  We guess that the packing should have reflective symmetry through the plane perpendicular to $H_{\vc e_1}$ and $H_{\vc e_4}$, and sends $\vc e_2$ to $\vc e_3$.  That plane has normal vector $\vc n_1$ that satisfies $\vc n_1\cdot \vc e_1=\vc n_1\cdot \vc e_4=\vc n_1\cdot \vc e_5=0$, and $\vc n_1\cdot \vc e_2+\vc n_1\cdot \vc e_3=0$.  Thus $\vc n_1=[0,1,-1,0,0]$.  We find $\vc n_1\cdot \vc n_1=-8$ and $R_{\vc n_1}\in \O^+$.   The vector $\vc n_2=[1,-1,0,0,0]$ describes the plane perpendicular to $H_{\vc e_3}$ and $H_{\vc e_4}$, and sends $\vc e_1$ to $\vc e_2$.  Its square norm is $-8$ as well, and $R_{\vc n_2}\in \O^+$.  The vector $\vc n_3=[1,1,-2,0,1]$ represents the plane perpendicular to $H_{\vc e_1}$, $H_{\vc e_2}$ and $H_{\vc e_4}$.  It satisfies $\vc n_3\cdot \vc n_3=-24$ and $R_{\vc n_4}\in \O^+$.  The vector $\vc n_4=[0,0,0,2,-1]$ is the plane that goes through the centers of the spheres $H_{\vc e_1}$, $H_{\vc e_2}$, and $H_{\vc e_3}$.  It satisfies $\vc n_4\cdot \vc n_4=-8$ and $R_{\vc n_4}\in \O^+$.  Let $P$  be the point of tangency of the sphere $H_{\vc e_1}$ and the plane represented by $L$ (not shown).  Then $P$ satisfies $P \cdot \vc e_1=P\cdot L=P\cdot P=0$, and solving we get $P=[1,0,0,1,0]$.  Finally, consider the sphere $H_{\vc n_5}$ that is centered at the point of tangency $O$ between the sphere $H_{\vc e_1}$ and the plane $H_{\vc e_4}$; and goes through the point $P$.  We solve $\vc n_5\cdot \vc e_2=\vc n_5\cdot \vc e_3=\vc n_5\cdot \vc e_4=\vc n_5\cdot P=0$ to get $\vc n_5=[1,1,1,3,-2]$.  It satisfies $\vc n_5\cdot \vc n_5=-24$ and $R_{\vc n_5}\in \O^+$.

 Let $\F'\subset \Bbb H^4$ be the region bounded by the hyperplanes $H_{\vc n_1}$, $H_{\vc n_2}$, $H_{\vc n_3}$, $H_{\vc n_4}$, and $H_{\vc e_4}$.  Its boundary at infinity, $\partial \F'\subset \partial \Bbb H^4$ is a triangular prism (together with the point $\vc e_5$), so $\F'$ has infinite hypervolume and a cusp at $\vc e_5$.  Let $H_{\vc n_5}^+$ represent the (hyper)halfspace in $\Bbb H^4$ that includes $\vc e_5$ and is bounded by $H_{\vc n_5}$; and let $H_{\vc n_5}^-$ be its complement, which contains the prism.  Let $\F=\H^+_{\vc n_5}\cap\F'$.  Then $\F$ is a polyhedron in $\Bbb H^4$ with finite hypervolume and the single cusp $\vc e_5$.  Its only possible symmetry is reflection in the plane midway between the planes with normal vectors $\vc n_4$ and $\vc e_4$, but that would send $\vc n_4$ to $\vc e_4$, which have different square norms.  Thus $\F$ is the fundamental domain for $\O^+$, and
 \[
 \O^+=\{R_{\vc e_4}, R_{\vc n_1}, R_{\vc n_2}, R_{\vc n_3}, R_{\vc n_4}, R_{\vc n_5}\}.
 \]
 Since only one face of $\F$ has a normal vector (in $\Pic(\X)$) with square norm $-2$, the ample cone is $\CC(\F)$, where
 \[
 \CC=\{R_{\vc n_1}, R_{\vc n_2}, R_{\vc n_3}, R_{\vc n_4}, R_{\vc n_5}\}.
 \]
 Its intersection with $\partial \Bbb H^4=\Bbb R^3$ is the Apollonian sphere packing.

 Note that this argument also shows that the sphere packing has no obstruction.  In higher dimensions, there is an obstruction \cite{Boy74}.  In dimension $m$, we can start with a configuration of $m+2$ pairwise tangent $(m-1)$-spheres, $H_{\vc e_1}, ..., H_{\vc e_{m+2}}$.  Assuming these represent $-2$ curves on a K3 surface $\X$, these give us the $(m+2)$-square intersection matrix $J_{m+2}$ with $-2$'s on the diagonal and $2$'s off the diagonal.  By Morrison's result \cite{Mor84}, such an $\X$ exists for $m\leq 8$ ($\rho\leq 10$).  The intersection of the ample cone with $\partial \Bbb H^{m+1}$ is space filling \cite{Kov94}, but may not give the expected analog of the Apollonian packing.  The fundamental isometry in the Apollonian packing is (historically) inversion in the sphere that is perpendicular to the first $m+1$ of the original spheres.  The normal vector for that reflection is $\vc n=[1,1,...,1,1-m]$, and
 \[
 R_{\vc n}(\vc x)=\vc x-\frac{2x_{m+2}}{1-m}\vc n.
 \]
 We see that $R_{\vc n}$ is in $\O^+$ for $m=2$ and $3$, but not for $m\geq 4$.  It is therefore sometimes said that the Apollonian packing does not generalize to dimensions $m\geq 4$ (see in particular the {\it Mathematical Review} for \cite{Boy74}).  The above, though, gives us an alternative generalization of the Apollonian packing, namely the ample cone associated to the intersection matrix $J_{m+2}$.  That ample cone may {\it a priori} have edges like the one pictured in \fref{fig1} (right), meaning the hyperspheres overlap.  It turns out that this is not the case for $m\leq 6$ \cite{Bar17};  the problem is still open for $m\geq 7$. The group of symmetries is not a Coxeter group, which is why they are not any of the alternative lattices described by Boyd \cite{Boy74}, nor those described by Maxwell \cite{Max81}, or Chen and Labb\'e \cite{C-L15}.

\section*{Orbital counting}

For an ample divisor $D$ and curve $C$ on a K3 surface $\X$, consider the counting function
\[
N(C,D,B)=\#\{\ss(C): \ss\in \Aut(\X), [\ss C]\cdot D<B\}.
\]
Let $\CC$ be the group of isometries that come from automorphisms of $\X$, and let $\dd$ be the Hausdorff dimension of the limit set of $\CC$.  Using recent results of Mohammadi and Oh \cite{M-O15}, and assuming modest conditions on $\CC$ and $\dd$, Dolgachev \cite{Dol16} shows that the limit
\[
\lim_{B\to \infty} \frac{N(C,D,B)}{T^{\dd}}
\]
exists and is non-zero.  The dimension $\dd$ is therefore the exponent of growth for $N(C,D,B)$.  For a K3 surface with ample cone the Apollonian circle packing, we know $\dd\approx 1.305688$ \cites{McM98, Boy82}.

\begin{bibdiv}

\begin{biblist}

\bib{Bar11}{article}{
   author={Baragar, Arthur},
   title={The ample cone for a $K3$ surface},
   journal={Canad. J. Math.},
   volume={63},
   date={2011},
   number={3},
   pages={481--499},
   issn={0008-414X},
   review={\MR{2828530 (2012f:14071)}},
   doi={10.4153/CJM-2011-006-7},
}

\bib{Bar16}{article}{
  author={Baragar, Arthur},
  title={The Neron-Tate pairing and elliptic K3 surfaces},
  status={submitted},
  year={2017?},
}

\bib{Bar17}{article}{
  author={Baragar, Arthur},
  title={Higher dimensional Apollonian packings, revisited},
  status={submitted},
  year={2017?},
}

\bib{Boy74}{article}{
   author={Boyd, David W.},
   title={A new class of infinite sphere packings},
   journal={Pacific J. Math.},
   volume={50},
   date={1974},
   pages={383--398},
   issn={0030-8730},
   review={\MR{0350626}},
}

\bib{Boy82}{article}{
   author={Boyd, David W.},
   title={The sequence of radii of the Apollonian packing},
   journal={Math. Comp.},
   volume={39},
   date={1982},
   number={159},
   pages={249--254},
   issn={0025-5718},
   review={\MR{658230}},
   doi={10.2307/2007636},
}

\bib{C-L15}{article}{
   author={Chen, Hao},
   author={Labb{\'e}, Jean-Philippe},
   title={Lorentzian Coxeter systems and Boyd-Maxwell ball packings},
   journal={Geom. Dedicata},
   volume={174},
   date={2015},
   pages={43--73},
   issn={0046-5755},
   review={\MR{3303040}},
   doi={10.1007/s10711-014-0004-1},
}
\bib{Dol16}{article}{
  author={Dolgachev, Igor},
  title={Orbital counting of curves on algebraic surfaces and sphere packings},
  booktitle={K3 surfaces and their moduli},
  year={2016},
  publisher={Springer International Publishing},
  address={Cham},
  pages={17--53},
  isbn={978-3-319-29959-4},
  doi={10.1007/978-3-319-29959-4\_2},
  url={http://dx.doi.org/10.1007/978-3-319-29959-4_2},
}

\bib{Kov94}{article}{
   author={Kov{\'a}cs, S{\'a}ndor J.},
   title={The cone of curves of a $K3$ surface},
   journal={Math. Ann.},
   volume={300},
   date={1994},
   number={4},
   pages={681--691},
   issn={0025-5831},
   review={\MR{1314742 (96a:14044)}},
   doi={10.1007/BF01450509},
}

\bib{Max81}{article}{
   author={Maxwell, George},
   title={Space groups of Coxeter type},
   booktitle={Proceedings of the Conference on Kristallographische Gruppen
   (Univ. Bielefeld, Bielefeld, 1979), Part II},
   journal={Match},
   number={10},
   date={1981},
   pages={65--76},
   issn={0340-6253},
   review={\MR{620801}},
}

\bib{McM98}{article}{
   author={McMullen, Curtis T.},
   title={Hausdorff dimension and conformal dynamics. III. Computation of
   dimension},
   journal={Amer. J. Math.},
   volume={120},
   date={1998},
   number={4},
   pages={691--721},
   issn={0002-9327},
   review={\MR{1637951}},
}
		
\bib{M-O15}{article}{
   author={Mohammadi, Amir},
   author={Oh, Hee},
   title={Matrix coefficients, counting and primes for orbits of
   geometrically finite groups},
   journal={J. Eur. Math. Soc. (JEMS)},
   volume={17},
   date={2015},
   number={4},
   pages={837--897},
   issn={1435-9855},
   review={\MR{3336838}},
   doi={10.4171/JEMS/520},
}

\bib{Mor84}{article}{
   author={Morrison, D. R.},
   title={On $K3$ surfaces with large Picard number},
   journal={Invent. Math.},
   volume={75},
   date={1984},
   number={1},
   pages={105--121},
   issn={0020-9910},
   review={\MR{728142}},
   doi={10.1007/BF01403093},
}

\bib{MSW02}{book}{
   author={Mumford, David},
   author={Series, Caroline},
   author={Wright, David},
   title={Indra's pearls},
   note={The vision of Felix Klein},
   publisher={Cambridge University Press, New York},
   date={2002},
   pages={xx+396},
   isbn={0-521-35253-3},
   review={\MR{1913879}},
   doi={10.1017/CBO9781107050051.024},
}

\ignore{
\bib{PSS71}{article}{
   author={Pjatecki{\u\i}-{\v{S}}apiro, I. I.},
   author={{\v{S}}afarevi{\v{c}}, I. R.},
   title={Torelli's theorem for algebraic surfaces of type ${\rm K}3$},
   language={Russian},
   journal={Izv. Akad. Nauk SSSR Ser. Mat.},
   volume={35},
   date={1971},
   pages={530--572},
   issn={0373-2436},
   review={\MR{0284440 (44 \#1666)}},
}}

\bib{Sod37}{article}{
  author={Soddy, Frederick},
  title={The bowl of integers and the hexlet},
  journal={Nature},
  volume={139},
  year={1937-01-09},
  pages={77--79},
  doi={10.1038/139077a0}
}

\end{biblist}
\end{bibdiv}
\end{document}